\DeclareMathAlphabet{\mathpzc}{OT1}{pzc}{m}{it}
\definecolor{red}{RGB}{255,0,0}
\definecolor{green}{RGB}{0,100,0}
\definecolor{blue}{RGB}{0,0,255}
\crefname{equation}{equation}{equations}
\crefname{figure}{Figure}{Figures}
\theoremstyle{plain}
\newtheorem{theorem}{Theorem}[section]
\newtheorem{lemma}[theorem]{Lemma}
\theoremstyle{definition}
\theoremstyle{remark}
\newtheorem{remark}[theorem]{Remark}
\numberwithin{equation}{section}
\numberwithin{equation}{section}
\renewcommand{\Re}{\mathop{\rm Re}}
\DeclareMathOperator{\supp}{supp}
\newcommand{\N}{\mathbb{N}}
\newcommand{\C}{\mathbb{C}}
\newcommand{\R}{\mathbb{R}}
\newcommand{\isdef}{\overset{\mathrm{def}}{=\joinrel=}}
\renewcommand{\cap}{\mathop{\rm cap}}
\newcommand\restr[2]{{
		\left.\kern-\nulldelimiterspace 
		#1 
		\vphantom{\big|} 
		\right|_{#2} 
}}
\title[Weighted equilibrium and the flow of derivatives of polynomials]{Weighted equilibrium and the flow of derivatives of polynomials}
\dedicatory{Dedicated to Ed Saff on the occasion of his 80th birthday,\\
	in recognition of his profound contributions to approximation theory\\
	and his enduring inspiration as a mentor, colleague, and friend.}
\author[A. Mart\'{\i}nez-Finkelshtein]{Andrei Mart\'{\i}nez-Finkelshtein}
\address[AMF]{Department of Mathematics, Baylor University, Waco, TX 76706, USA, and Department of Mathematics, University of Almer\'{\i}a, Almer\'{\i}a, Spain}
\email{A\_Martinez-Finkelshtein@baylor.edu}
\author[E.~A.~Rakhmanov]{Evgenii A.~Rakhmanov}
\address[ER]{Department of Mathematics, University of South Florida, Tampa, FL 33620, USA}
\email{rakhmano@usf.edu}
\date{\today}
\keywords{Polynomials; Zeros; Asymptotic distribution of zeros; Weak convergence; Equilibrium measure; Weighted equilibrium }
\subjclass[2020]{Primary:  30C15; Secondary: 30C10; 31A05}
\begin{document}

\begin{abstract}
Given a sequence of polynomials $Q_n$ of degree $n$ with zeros on $[-1,1]$, we consider the triangular table of derivatives $Q_{n, k}(x)=d^k Q_n(x) /d x^k$. Under the assumption that the sequence $\{Q_n\}$ has a weak* limiting zero distribution (an empirical distribution of zeros) given by the arcsine law,  we show that as $n, k \rightarrow \infty$ such that $k / n \rightarrow t \in[0,1)$, the zero-counting measure of the polynomials $Q_{n, k}$ converges to an explicitly given measure $\mu_t$. This measure is the equilibrium measure of $[-1,1]$ of size $1-t$ in an external field given by two mass points of size $t/2$ fixed at $\pm 1$. 
The main goal of this paper is to provide a direct potential theory proof of this fact.
\end{abstract}

\maketitle

\section{Introduction} \label{sec:intro}

Given a sequence of (monic) polynomials $Q_n$, $\deg Q_n=n$, $n\in \N$, with zeros on the interval $[-1,1]$, we consider the triangular table of their iterated derivatives 
\begin{equation}
	\label{def:QN}
	Q_{n,k}(x) \isdef  \frac{(n-k)!}{n!}\, \frac{d^k}{dx^k } Q_n(x) = x^{n-k}+\text{lower degree terms}, \quad k=0, 1, \dots, n-1.
\end{equation}
The main result of this paper is the following theorem:
\begin{theorem} \label{mainthm0}
	Let $Q_n$ be a sequence of polynomials, $\deg Q_n=n$, whose zeros belong to the interval $[-1,1]\subset \R$, and such that the sequence of probability zero-counting measures\footnote{\, In all zero-counting measures the zeros are considered with account of their multiplicity.}
	$$
	\sigma_{n,0}= \frac{1}{ n} \chi\left(Q_{n}\right) , \quad \chi(Q_n)\isdef \sum_{Q_n(x)=0} \delta_x,
	$$
	has a weak-* limit $\mu_0$, given by the arc-sine distribution, namely,
	\begin{equation} \label{arcsined}
		d\mu_0(x) = \frac{dx}{\pi \sqrt{1-x^2}}, \quad x\in (-1,1).
	\end{equation}
	Let 
	\begin{equation} \label{eq:defSigmaNK}
		\sigma_{n,k} \isdef  \frac{1}{ n} \chi\left(Q_{n, k }\right) =  \frac{1}{ n} \sum_{Q_{n,k}(x)=0} \delta_x.
	\end{equation}
	
	Then for any $t \in[0,1)$ and any sequence $k_n$ of natural numbers with $k_n / n \rightarrow t$, the sequence $\sigma_{n,k_n}$ has a weak-* limit $\mu_t$, where
	$\mu_t$ is absolutely continuous and
	\begin{equation}\label{eqArcsine}
		d\mu_t(x) = \frac{1}{\pi} \frac{\sqrt{1-t^2-x^2}}{1-x^2}\, dx, \qquad x\in [-\sqrt{1-t^2}, \sqrt{1-t^2}],
	\end{equation}
\end{theorem}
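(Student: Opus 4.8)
The plan is to combine a soft compactness argument with a discrete ``derivative flow'' for Cauchy transforms whose continuum limit pins down every subsequential limit.

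\emph{Step 1 (compactness and reduction).} By the Gauss--Lucas theorem the zeros of each $Q_{n,k}$ lie in the convex hull of the zeros of $Q_n$, hence in $[-1,1]$, and $\|\sigma_{n,k_n}\| = 1-k_n/n \to 1-t$; thus $\{\sigma_{n,k_n}\}$ is relatively compact for weak-$*$ convergence, and it suffices to show every weak-$*$ limit point equals $\mu_t$. Passing to a subsequence and extracting diagonally over rational $s\in[0,t]$, I may assume $\sigma_{n,\lfloor sn\rfloor}\weak\nu_s$ for all such $s$, with $\nu_0=\mu_0$ the arcsine law \eqref{arcsined}; the task is to identify $\nu_t$.

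\emph{Step 2 (the flow).} Since $C_{n,k}(z)\eqq\int(z-x)^{-1}\,d\sigma_{n,k}(x)=\tfrac1n\,Q_{n,k}'(z)/Q_{n,k}(z)$ and, by \eqref{def:QN}, $Q_{n,k+1}=\tfrac1{n-k}Q_{n,k}'$, a short computation using $(Q'/Q)'=Q''/Q-(Q'/Q)^2$ gives the \emph{exact} recursion
\[
C_{n,k+1}(z)=C_{n,k}(z)+\frac1n\,\frac{C_{n,k}'(z)}{C_{n,k}(z)},\qquad z\in\C\setminus[-1,1].
\]
This is a discretization, with time-step $1/n$, of the transport equation $\partial_t C_{\nu_t}(z)=\partial_z C_{\nu_t}(z)/C_{\nu_t}(z)$; letting $n\to\infty$ (the technical heart, see below) one obtains this equation with initial datum $C_{\nu_0}(z)=1/\sqrt{z^2-1}$ (branch $\sim 1/z$ at $\infty$).

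\emph{Step 3 (solving and identifying).} Setting $R_t\eqq1/C_{\nu_t}$ turns the equation into the inviscid Burgers equation $\partial_t R_t=R_t\,\partial_z R_t$ with $R_0(z)=\sqrt{z^2-1}$ (branch $\sim z$ at $\infty$), solved by characteristics as $R_t(z)=\sqrt{z_0^2-1}$ along the line $z=z_0-t\sqrt{z_0^2-1}$. Eliminating $z_0$ (by analytic continuation from a neighbourhood of $\infty$, which places the shock precisely on the emerging support) yields $C_{\nu_t}(z)=\bigl(tz-\sqrt{z^2-(1-t^2)}\bigr)/(1-z^2)$ with the branch $\sim(1-t)/z$ at $\infty$, and Stieltjes--Perron inversion recovers exactly the density \eqref{eqArcsine}; hence $\nu_t=\mu_t$. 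Since every weak-$*$ limit point of $\sigma_{n,k_n}$ is thereby $\mu_t$, the sequence converges to $\mu_t$. (As a check, $\mu_t$ is also the equilibrium measure of $[-1,1]$ of mass $1-t$ in the field of charges $t/2$ at $\pm1$: with $a=\sqrt{1-t^2}$ one has $\mu_t=\omega_{[-a,a]}-\tfrac t2(\widehat{\delta_1}+\widehat{\delta_{-1}})$, balayage onto $[-a,a]$, and \eqref{eqArcsine} satisfies the corresponding Euler--Lagrange equality on $[-a,a]$ and inequality on $[-1,1]$; verifying those conditions directly for the limit would give a second, purely static, identification.)

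\emph{Main obstacle.} The crux is the passage to the limit in Step 2: one must control the whole family $\{\sigma_{n,\lfloor sn\rfloor}\}_{s\in[0,t]}$ simultaneously and pass from the discrete recursion to the PDE uniformly for $z$ in compact subsets of $\C\setminus[-1,1]$. The enabling estimates, valid there locally uniformly, are $|C_{n,k}(z)|\le 1/\operatorname{dist}(z,[-1,1])$ together with $|C_{n,k}(z)|\ge|\Im C_{n,k}(z)|=|\Im z|\int|z-\xi|^{-2}\,d\sigma_{n,k}(\xi)$ (and, for real $z\notin[-1,1]$, $C_{n,k}(z)$ has constant sign), so that $|C_{n,k}(z)|$ is bounded away from $0$ locally uniformly, $C_{n,k}'$ is bounded (Cauchy estimates), and the recursion steps are $O(1/n)$. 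These yield equicontinuity of $s\mapsto C_{\nu_s}(z)$ and, combined with the weak-$*$ convergence of the $\sigma_{n,\lfloor sn\rfloor}$, the convergence of this Euler-type scheme to the solution of the limiting equation (a Gr\"onwall/Arzel\`a--Ascoli argument), as well as the continuity in $t$ implicitly used in Step 3; branch choices throughout are fixed by normalizing at a base point $z>1$.
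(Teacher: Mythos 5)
Your argument is correct in outline, but it takes a genuinely different route from the paper. You run the dynamic, Cauchy-transform approach: the exact recursion $C_{n,k+1}=C_{n,k}+\tfrac1n C_{n,k}'/C_{n,k}$, its continuum limit $\partial_t C=\partial_z C/C$, and the solution of the resulting inviscid Burgers equation by characteristics — this is precisely the method of \cite{MFR2024}, which the introduction cites as an alternative derivation of Theorem~\ref{mainthm0}, and your Step~3 computation does land on the correct Cauchy transform and, via Stieltjes--Perron, on \eqref{eqArcsine}. The paper deliberately avoids this dynamics: it is a purely static, potential-theoretic argument. It first identifies $\mu_t$ as the weighted equilibrium measure $\lambda(1-t,\varphi_t,\Delta)$ for the field $\varphi_t=\tfrac t2\log|x^2-1|^{-1}$ of two point charges at $\pm1$ (Lemma~\ref{lemma:identifyequil}) and computes the equilibrium constant \eqref{valueM}; then, for an arbitrary weak-$*$ limit point $\nu$ of $\sigma_{n,k_n}$, it derives the one-sided bound \eqref{eq:estimate} on $U^{\nu}$ directly from the Cauchy integral formula for the $k$-th derivative over a contour $\Gamma$ plus Stirling's formula, optimizes the choice of $\Gamma$ over the ellipses $|\Phi(z)|=\mathrm{const}$ to obtain the Euler--Lagrange inequality $U^{\nu}+\varphi_t\ge m_{1-t,\varphi}$ quasi-everywhere, and concludes by the unicity characterization \eqref{charact1a}. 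This is exactly the ``second, purely static, identification'' you mention parenthetically at the end of Step~3 — except that the paper verifies the variational inequality for the \emph{unknown} limit $\nu$ rather than for the candidate $\mu_t$, which is what makes the identification work without any evolution equation. What each approach buys: yours generalizes to initial distributions other than the arcsine law (at the price of the convergence analysis for the Euler scheme, which you correctly flag as the technical heart and only sketch — the upper/lower bounds on $|C_{n,k}|$, the Cauchy estimates, and the Arzel\`a--Ascoli/Gr\"onwall argument all go through, but would need to be written out, as would the uniqueness-by-characteristics-plus-analytic-continuation step); the paper's argument needs only one contour estimate and one classical equilibrium characterization, requires no control of the intermediate measures $\sigma_{n,\lfloor sn\rfloor}$ for $0<s<t$, and exhibits the electrostatic meaning of the limit, which is the stated point of the paper.
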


A simple illustration of this result is given by the sequence $Q_n=P_n^{(\alpha, \beta)}$, $\alpha, \beta>-1$, of the (monic) Jacobi polynomials, see \cite[\S18.3]{NIST:DLMF}. The well-known property \cite[\S18.9]{NIST:DLMF} 
$$
Q'_n(x)=n P_{n-1}^{(\alpha+1, \beta+1)}(x)
$$
allows for the explicit computation of the weak-* limit $\mu_t$ of the sequence \eqref{eq:defSigmaNK}, as $k/n\to t$, in this case. This yields the expression in \eqref{eqArcsine}. Theorem~\ref{mainthm0} shows that this behavior is not intrinsic to Jacobi polynomials, but it is generic for  any sequence of polynomials whose initial distribution is given by the arc-sine law. 

Theorem~\ref{mainthm0} can be derived, in particular, from the results contained in the recent paper \cite{MFR2024}, where the description of the Cauchy transform of the limiting measure $\mu_t$ (under very general assumptions) is done in terms of solutions of the inviscid Burgers equation. The main contribution of this paper is the potential-theoretic approach, which characterizes the flow in terms of the equilibrium measure in an external field generated by two mass points.  Since one of the main steps in our analysis is an estimate of the Cauchy integral expressing the polynomials $Q_{n, k} $, it is worth mentioning a recent work of  \cite{Shapiro21} on zeros of polynomials generated by a Rodrigues-type formula.

\section{Proof of Theorem~\ref{mainthm0}}\label{sec:main}
Let us start by setting up the following notation: for $0<\tau\le 1$, denote $ \mathcal M_\tau (F)$ the set of Borel measures of total mass $\tau$ supported on  $F\subset \R$. 

Given a finite compactly supported Borel measure $\mu$ on $\R$, we define its \textit{logarithmic potential} as 
\begin{equation} \label{def:Potential}
	U^\mu(z)\isdef  \int \log \frac{1}{|z-y|}\, d\mu(y). 
\end{equation}
For our purposes, it will be sufficient to consider measures on an interval of the real line, without loss of generality,  $\Delta\isdef [-1,1]\subset \R$. 
Recall the following weighted equilibrium problem that we state in its simplified form, see e.g.~\cite{Saff:97}: given $\tau \in(0,1]$  and a continuous function $\varphi:(-1,1) \to \R$ (the \textit{external field}), there is the unique equilibrium measure $\lambda=\lambda(\tau,\varphi, \Delta) \in \mathcal M_\tau(\Delta )$ minimizing the weighted energy
$$
\int U^{\mu}\, d\mu + 2 \int \varphi \, d\mu
$$
in the class $\mathcal M_\tau(\Delta )$\footnote{\, We could clearly reformulate the problem in the class $\mathcal M_1(\Delta )$; the parameter $\tau$ is introduced here for convenience.}. It is known that there is a constant $m_{\tau ,\varphi}$ such that
\begin{equation} \label{eqCondition}
	U^{\lambda}(x) + \varphi(x) \begin{cases}
		= m_{\tau ,\varphi}, & x\in \supp(\lambda), \\
		\ge m_{\tau ,\varphi}, & x\in \Delta.
	\end{cases}
\end{equation}
The equilibrium measure $\lambda$ is also characterized by the following property:
\begin{equation} \label{eqConstant}
	m_{\tau ,\varphi}=  \min_{x\in \Delta } \left(  U^{\lambda}(x) + \varphi(x) \right) =\max_{\mu\in \mathcal M_\tau(\Delta )} \min_{x\in \Delta } \left(  U^{\mu}(x) + \varphi(x) \right) .
\end{equation}
In other words, for any $\mu \in \mathcal M_\tau(\Delta )$,
\begin{equation} \label{charact1a}
	\min_{x\in \Delta } \left(  U^{\mu}(x) + \varphi(x) \right)  \ge m_{\tau ,\varphi} \text{ quasi-everywhere}\quad \Rightarrow \quad \mu = \lambda(\tau,\varphi, \Delta).
\end{equation}

This will be the key property we will use to prove Theorem~\ref{mainthm0}, taking into account the following well-known fact:
\begin{lemma}\label{lemma:identifyequil}
	For $t\in [0,1)$, measure $\mu_t$ in  \eqref{eqArcsine} is the equilibrium measure $\lambda(1-t,\varphi, \Delta)\in \mathcal M_{1-t}(\Delta)$ in the external field
	\begin{equation} \label{phi}
		\varphi(x)=	\varphi_t(x) \isdef \frac{t}{2} \log \frac{1}{|x^2-1|}, \quad x\in (-1,1).
	\end{equation} 
\end{lemma}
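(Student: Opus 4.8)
The plan is to verify the characterization \eqref{charact1a} directly: I will exhibit a candidate for the equilibrium constant $m_{1-t,\varphi_t}$ and show that the logarithmic potential of $\mu_t$ from \eqref{eqArcsine} satisfies
\[
U^{\mu_t}(x) + \varphi_t(x) = m_{1-t,\varphi_t}, \qquad x\in \supp(\mu_t) = [-a,a], \quad a\isdef\sqrt{1-t^2},
\]
and that the same quantity is $\ge m_{1-t,\varphi_t}$ on all of $\Delta = [-1,1]$. Since $\mu_t \in \mathcal M_{1-t}(\Delta)$ (the density integrates to $1-t$, which is a quick check using $\int_{-a}^a \sqrt{a^2-x^2}/(1-x^2)\,dx = \pi(1-\sqrt{1-a^2}) = \pi(1-t)$), property \eqref{charact1a} then forces $\mu_t = \lambda(1-t,\varphi_t,\Delta)$.

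The cleanest route to the equality on the support is via the Cauchy transform. First I would compute $C^{\mu_t}(z) = \int d\mu_t(y)/(z-y)$ in closed form; because $\mu_t$ is an arcsine-type (semicircle-weighted) measure supported on $[-a,a]$ with poles managed at $\pm 1$, a residue/contour argument (or matching the known Jacobi case) gives an explicit algebraic expression, essentially
\[
C^{\mu_t}(z) = \frac{\sqrt{z^2-1}-\sqrt{z^2-a^2}}{z^2-1}\,z
\]
(with branches fixed so that $C^{\mu_t}(z) \sim (1-t)/z$ at infinity). Since $-2(U^{\mu_t})'(x)$ equals $2\,\Re C^{\mu_t}(x)$ off the support and the boundary values of $C^{\mu_t}$ on $[-a,a]$ have real part equal to $-\varphi_t'(x)$, integrating the identity $\frac{d}{dx}\big(U^{\mu_t}(x)+\varphi_t(x)\big) = 0$ on $(-a,a)$ yields the constancy there; one evaluation (say at $x=0$, or via the known equilibrium constant for this two-point-charge problem) pins down $m_{1-t,\varphi_t}$. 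Then for $x\in[a,1)$ (and symmetrically $(-1,-a]$) I would show $\frac{d}{dx}\big(U^{\mu_t}(x)+\varphi_t(x)\big) \le 0$, i.e.\ the function decreases from its boundary value $m_{1-t,\varphi_t}$ at $x=a$ down toward $x=1$ — here the positivity of $C^{\mu_t}(x)$ for real $x>a$ competes with the $+\tfrac{t}{2}\cdot\frac{2x}{1-x^2}$ coming from $\varphi_t'$, and a direct comparison of the two explicit expressions settles the sign. Near $x=\pm1$ the potential $U^{\mu_t}$ stays bounded while $\varphi_t\to+\infty$, so the inequality is automatic in a neighborhood of the endpoints, which localizes the work to a compact subinterval.

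The main obstacle I anticipate is the careful bookkeeping of branch cuts in the Cauchy transform and the corresponding boundary-value computation: getting $\Re C^{\mu_t}_\pm(x) = -\varphi_t'(x)$ on $(-a,a)$ right requires correctly identifying which square-root branches are active on the cut $[-a,a]$ versus the interval $[a,1]$, and keeping track of the contribution of the "virtual" mass $t$ sitting at $\pm1$ that is encoded in the $z^2-1$ denominator rather than in $\mu_t$ itself. A safe alternative that sidesteps most of this is to invoke the already-cited computation for Jacobi polynomials: the measure $\mu_t$ is \emph{by construction} the $k/n\to t$ limit of zero counting measures of derivatives of $P_n^{(\alpha,\beta)}$, and for those one can appeal to the known fact (e.g.\ from the theory of the ratio $Q'_n/Q_n$ or from \cite{MFR2024}) that such limits solve exactly the weighted equilibrium problem with charges $t/2$ at $\pm1$; then \eqref{charact1a} need only be checked for this one explicit family. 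Either way, once the variational inequality \eqref{eqCondition} is in hand, uniqueness of the equilibrium measure finishes the proof of the lemma.
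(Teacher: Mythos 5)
The paper does not actually prove this lemma; it disposes of it by citing \cite[Examples IV.1.18 and IV.6.2]{Saff:97}. Your plan --- verify the total mass, establish the variational conditions \eqref{eqCondition} for $\mu_t$ directly via its Cauchy transform, and conclude by uniqueness of the weighted equilibrium measure --- is therefore a genuinely different, self-contained route, and its overall architecture is the standard correct one; the mass computation $\frac1\pi\int_{-a}^a\frac{\sqrt{a^2-x^2}}{1-x^2}\,dx=1-\sqrt{1-a^2}=1-t$ is right. However, two of the concrete steps, as you have written them, would fail. First, your candidate Cauchy transform is not the Cauchy transform of $\mu_t$: expanding at infinity, $\sqrt{z^2-1}-\sqrt{z^2-a^2}=\frac{a^2-1}{2z}+O(z^{-3})$, so your expression is $-\frac{t^2}{2z^2}+O(z^{-4})$, i.e.\ $O(z^{-2})$, and no choice of branches can repair the required normalization $\sim(1-t)/z$. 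The correct formula is
\begin{equation*}
C^{\mu_t}(z)=\frac{\sqrt{z^2-(1-t^2)}-t z}{z^2-1},
\end{equation*}
obtained by observing that $\sqrt{z^2-a^2}/(z^2-1)$ has the same jump across $[-a,a]$ as $C^{\mu_t}$ and then subtracting its poles at $\pm1$, each of residue $t/2$; the residues matching the decay at infinity is exactly the statement that the ``missing'' mass $t$ sits at $\pm1$.

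Second, and more seriously, you have the direction of the variational inequality off the support backwards. To invoke \eqref{eqCondition}/\eqref{charact1a} you must show $U^{\mu_t}+\varphi_t\ge m$ on $[a,1)$, i.e.\ that the function does \emph{not} drop below its value at $x=a$; proving $\frac{d}{dx}\bigl(U^{\mu_t}+\varphi_t\bigr)\le0$ on $(a,1)$, so that ``the function decreases from $m$ at $x=a$,'' would show the inequality \emph{fails} just to the right of $a$ and would disprove the lemma. In fact the correct computation gives, for $x\in(a,1)$,
\begin{equation*}
\frac{d}{dx}\bigl(U^{\mu_t}+\varphi_t\bigr)(x)=-C^{\mu_t}(x)+\varphi_t'(x)=\frac{\sqrt{x^2-(1-t^2)}-tx}{1-x^2}+\frac{tx}{1-x^2}=\frac{\sqrt{x^2-(1-t^2)}}{1-x^2}>0,
\end{equation*}
which is exactly what is needed (and makes the separate endpoint argument about $\varphi_t\to+\infty$ unnecessary). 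Two smaller points: with your convention $C^\mu(z)=\int d\mu(y)/(z-y)$ the on-support identity is $\Re C^{\mu_t}_\pm(x)=+\varphi_t'(x)=\frac{tx}{1-x^2}$, not $-\varphi_t'(x)$ (and the corrected transform does satisfy this, since $\Re C^{\mu_t}_\pm(x)=-tx/(x^2-1)$ on $(-a,a)$); and the proposed ``safe alternative'' via Jacobi polynomials is essentially circular, because the classical identification of the limiting zero distribution of $P_{n-k}^{(\alpha+k,\beta+k)}$ with $k/n\to t$ is itself obtained by solving precisely this weighted equilibrium problem.
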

For the proof, see e.g.~\cite[Examples IV.1.18 and IV.6.2]{Saff:97}. 

\begin{lemma}
	Let $\varphi$ be a continuous external field on $\Delta$,  $\tau\in (0,1]$, $\lambda=\lambda(\tau,\varphi, \Delta) \in \mathcal M_\tau(\Delta )$ the equilibrium measure. If the support $S\isdef \supp\left(\lambda_{\tau, \varphi} \right)\subset \Delta $ is an interval then 
	\begin{equation} \label{identityM}
		m_{\tau ,\varphi} = u_\varphi(\infty) + \tau \log \frac{1} {\cap(S)},
	\end{equation}
	where  $u_\varphi$ is the solution of the Dirichlet problem in $\overline{\C} \setminus S$ with the boundary values $\varphi$, and  $\cap(S)=\mathrm{length}(S)/4$ is the logarithmic capacity of $S$. 
	
	In particular, with $\varphi=	\varphi_{t}$ given in \eqref{phi},
	\begin{equation} \label{valueM}
		m_{1-t ,\varphi} =  \log(2)- \frac{1+t}{2}\log(1+t)-  \frac{ 1-t}{2}\log(1-t).
	\end{equation}
\end{lemma}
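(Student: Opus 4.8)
The plan is to establish the general identity \eqref{identityM} first, and then specialize to $\varphi = \varphi_t$ to obtain \eqref{valueM} by an explicit computation. For the general identity, I would start from the equilibrium condition \eqref{eqCondition}: on $S = \supp(\lambda)$ we have $U^\lambda(x) + \varphi(x) = m_{\tau,\varphi}$. The function $U^\lambda$ is harmonic in $\overline{\C}\setminus S$ and $U^\lambda(z) = \tau\log\frac{1}{|z|} + o(1)$ as $z\to\infty$. Consider the function $h(z) \isdef m_{\tau,\varphi} - U^\lambda(z)$; it is harmonic in $\overline{\C}\setminus S$ away from $\infty$, and on $S$ it equals $\varphi$ by the equilibrium condition (using that $S$ is an interval, hence regular for the Dirichlet problem, so the boundary values are attained). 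Thus $h$ and $u_\varphi$ have the same boundary values on $S$; they differ by a harmonic function on $\overline\C\setminus S$, and comparing the behavior at $\infty$ — where $h(z) = m_{\tau,\varphi} - \tau\log\frac1{|z|} + o(1) = m_{\tau,\varphi} + \tau\log|z| + o(1)$ while $u_\varphi(\infty)$ is finite — the discrepancy is precisely a multiple of the Green's function $g_{\overline\C\setminus S}(z,\infty)$, which behaves like $\log|z| - \log\cap(S) + o(1)$ at infinity. Matching the $\log|z|$ coefficients forces the multiple to be $\tau$, and matching the constant terms then gives $m_{\tau,\varphi} = u_\varphi(\infty) + \tau\log\frac1{\cap(S)}$, which is \eqref{identityM}.

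For the specialization, I would use Lemma~\ref{lemma:identifyequil}: with $\tau = 1-t$ and $\varphi = \varphi_t$, the support is $S = [-a,a]$ with $a = \sqrt{1-t^2}$, so $\cap(S) = a/2 = \sqrt{1-t^2}/2$ and $\tau\log\frac1{\cap(S)} = (1-t)\log\frac{2}{\sqrt{1-t^2}}$. It remains to compute $u_{\varphi_t}(\infty)$, the value at infinity of the bounded harmonic extension of $\varphi_t(x) = \frac t2\log\frac1{|x^2-1|}$ from $S$ to $\overline\C\setminus S$. The cleanest route is the Poisson-kernel-at-infinity (balayage) formula: $u_{\varphi_t}(\infty) = \int_S \varphi_t(x)\, d\omega_S(x)$, where $\omega_S$ is the harmonic (equilibrium) measure of $S = [-a,a]$ at $\infty$, i.e. the arcsine distribution $d\omega_S(x) = \frac{dx}{\pi\sqrt{a^2 - x^2}}$. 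So I must evaluate
\begin{equation}\label{eq:pfint}
	u_{\varphi_t}(\infty) = \frac t2 \int_{-a}^{a} \log\frac{1}{|x^2 - 1|}\,\frac{dx}{\pi\sqrt{a^2 - x^2}}, \qquad a = \sqrt{1-t^2}.
\end{equation}
Writing $|x^2 - 1| = (1-x)(1+x)$ on $S$ (valid since $a < 1$), the integral splits into two copies of $\int_{-a}^a \log\frac{1}{|x\mp 1|}\frac{dx}{\pi\sqrt{a^2-x^2}}$, each of which is a standard logarithmic potential of the arcsine measure evaluated at an exterior point: $\int_{-a}^a \log\frac{1}{|c - x|}\frac{dx}{\pi\sqrt{a^2-x^2}} = \log\frac{2}{|c| + \sqrt{c^2 - a^2}}$ for $|c| > a$. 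With $c = \pm 1$ this gives $\log\frac{2}{1 + \sqrt{1 - a^2}} = \log\frac{2}{1+t}$, so $u_{\varphi_t}(\infty) = \frac t2\bigl(2\log\frac{2}{1+t}\bigr) = t\log 2 - t\log(1+t)$.

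Combining, $m_{1-t,\varphi} = t\log 2 - t\log(1+t) + (1-t)\log 2 - \frac{1-t}{2}\log(1-t^2) = \log 2 - t\log(1+t) - \frac{1-t}{2}\log(1+t) - \frac{1-t}{2}\log(1-t)$, and collecting the $\log(1+t)$ terms ($t + \frac{1-t}{2} = \frac{1+t}{2}$) yields exactly \eqref{valueM}. The only genuine subtlety I anticipate is the first paragraph: making rigorous that $h - u_\varphi$ is exactly $\tau\, g_{\overline\C\setminus S}(\cdot,\infty)$ requires that $S$ be regular for the Dirichlet problem so that both $u_\varphi$ and $U^\lambda$ attain the correct boundary values quasi-everywhere (and everywhere, since $S$ is an interval) — this is where the hypothesis that $S$ is an interval is used — and a standard uniqueness argument for harmonic functions with prescribed boundary values and prescribed logarithmic singularity at $\infty$. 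The second half is a routine computation with the arcsine potential.
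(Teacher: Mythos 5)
Your proof is correct and follows essentially the same route as the paper's: the identity \eqref{identityM} is obtained by the same harmonic-function comparison (your $\tau\, g_{\overline{\C}\setminus S}(\cdot,\infty)$ is exactly the paper's $\tau\log|\Psi|$, and matching at $\infty$ is the same step), and your evaluation of $u_{\varphi_t}(\infty)$ as the integral of $\varphi_t$ against the arcsine measure of $[-a,a]$, split into two one-point logarithmic potentials, is the same computation the paper carries out. The numerical conclusion $m_{1-t,\varphi}=\log 2-\frac{1+t}{2}\log(1+t)-\frac{1-t}{2}\log(1-t)$ checks out, so there is nothing to fix.
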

\begin{proof}
	Let $\Psi$ be the conformal mapping of $\overline{\C} \setminus S$ onto the exterior of the unit circle, such that $\Psi(\infty)=\infty$, and normalized by $\Psi'(\infty)>0$. Then 
	$\Psi'(\infty)=1/\cap(S)$. Consider the function
	$$
	H(z)\isdef U^{\lambda}(z)+\tau \log|\Psi(z)| +u_\varphi(z).
	$$
	By \eqref{eqCondition}, $H$ is harmonic in $\overline{\C}\setminus S$ and identically equal to $m_{\tau ,\varphi}$ on $S$, which implies that $H(z) = m_{\tau ,\varphi}$ for all  $ z\in \overline{\C}\setminus S$. In particular, 
	\begin{align*}
		m_{\tau ,\varphi} & = \lim_{z\to \infty} \left( U^{\lambda}(z)+\tau \log|\Psi(z)| +u_\varphi(z)\right) \\
		& = \lim_{z\to \infty} \left( U^{\lambda}(z)+\tau \log|z|  \right)  +  \lim_{z\to \infty}  \tau \log\left| \frac{\Psi(z)}{z}\right|   +u_\varphi(\infty)
	\end{align*}
	and \eqref{identityM} follows from the well-known properties of the logarithmic potential and the conformal mapping $\Psi$. 
	
	By Lemma~\ref{lemma:identifyequil} and formula \eqref{eqArcsine}, for $\varphi=\varphi_{t}$, we have $\tau=1-t$,  $S=[-s,s]$ with $s=\sqrt{1-t^2}$. In this case, $\Psi(z)=\Phi(z/s)$, where $\Phi(z)$ is the inverse Zhukovski map, 
	\begin{equation} \label{inverseZhuk}
		\Phi(z)=z+(z^2-1)^{1/2}, 
	\end{equation}
	and we take the single-valued branch of the square root in $\C\setminus \Delta$ fixed by $(z^2-1)^{1/2}>0$ for $z>1$. 
	
	On the other hand, 
	$$
	u_\varphi(z)=\Re \left( \frac{\sqrt{z^2-s^2}}{\pi} \int_{-s}^s \frac{\varphi_{t}(x)}{\sqrt{s^2-x^2}}\frac{dx}{z-x}\right),
	$$
	so that
	\begin{align*}
		u_\varphi(\infty) & =  \frac{1}{\pi} \int_{-s}^s \frac{\varphi_{t}(x)}{\sqrt{s^2-x^2}}dx \\
		& = \frac{{t}}{2} \left(  \frac{1}{\pi} \int_{-s}^s \log\frac{1}{1-x} \frac{dx}{\sqrt{s^2-x^2}} +  \frac{1}{\pi} \int_{-s}^s \log\frac{1}{1+x} \frac{dx}{\sqrt{s^2-x^2}}  \right)\\
		& =   \frac{t}{\pi} \int_{-s}^s \log\frac{1}{1-x} \frac{dx}{\sqrt{s^2-x^2}} =t \, U^{\lambda}(1),
	\end{align*}
	where  $\lambda=\lambda(1,0, [-s,s]) $  is the equilibrium (Robin) measure of the interval $[-s,s]$. Since
	$$
	U^{\lambda}(z)=-\log \left(\cap ([-s,s]) \right) -\log\left|\Phi(z/s) \right|= \log \left( \frac{2}{s} \right) -\log\left|\Phi(z/s) \right|,
	$$
	we get that
	$$
	u_\varphi(\infty)=t  \left( \log \left( \frac{2}{s} \right) -  \log\left|\Phi(1/s) \right| \right) = t  \left( \log \left( \frac{2}{s} \right) -  \frac{1}{2}\log( 1+t ) + \frac{1}{2}\log( 1-t )\right) .
	$$
	In consequence,
	\begin{align*}
		m_{\tau ,\varphi}  & = - t   \log\left|\Phi(1/s) \right|   +   \log \left( \frac{2}{s} \right)
		\\
		& = \log(2)- \frac{1+t}{2}\log(1+t)-  \frac{ 1-t}{2}\log(1-t).
	\end{align*}
\end{proof}
\begin{remark}
	Clearly, formula \eqref{identityM} is valid under much milder assumptions on $\varphi$. We have stated the restricted version here, sufficient for our purposes, for the sake of simplicity. 
\end{remark}

Now we turn to the sequence of zero-counting measures $\{	\sigma_{n,k_n}\}_{n\in \Lambda}$, defined in \eqref{eq:defSigmaNK}, with the assumption that
\begin{equation} \label{assumption1}
	\sigma_{n,0} \stackrel{*}{\longrightarrow} \mu_0 \in \mathcal M_1(\Delta), \quad n\to \infty.
\end{equation}
The following lemma is valid even if $\mu_0$ is not the arc-sine distribution on $\Delta$:
\begin{lemma} \label{lem_lowerestimate}
	Let $\{ k_n\}$, $n\in  \N$, be a sequence such that
	\begin{equation} \label{limT}
		\lim_{n  } \frac{k_n}{n} = t\in [0,1)
	\end{equation}
	exists, and let $\Lambda \subseteq \N$ be such that the following weak-* limit exists:
	\begin{equation} \label{assumptNu}
		\sigma_{n,k_n} \stackrel{*}{\longrightarrow} \nu \in \mathcal M_{1-t}(\Delta), \quad n\in \Lambda.
	\end{equation}
	Then, for any $\zeta\in \Delta$ where $U^\nu$ is continuous, and quasi-everywhere on $\Delta$, for any Jordan contour $\Gamma$ enclosing $\Delta$, 
	\begin{equation} \label{eq:estimate}
		U^{ \nu}(\zeta)\ge -c_{t} + \min_{z\in \Gamma} \left( U^{\mu_0}(z)+t\log|z-\zeta| \right),
	\end{equation}
	where
	\begin{equation} \label{defc}
		c_t \isdef t\log t + (1-t) \log(1-t).
	\end{equation}
\end{lemma}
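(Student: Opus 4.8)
The plan is to extract a lower bound on $U^\nu$ directly from an integral representation of the derivatives $Q_{n,k_n}$. The starting point is the classical Cauchy-type formula expressing a high-order derivative as a contour integral: for a Jordan contour $\Gamma$ enclosing $\Delta$,
\begin{equation*}
	Q_{n,k}(\zeta) = \frac{(n-k)!}{n!}\,\frac{d^k}{d\zeta^k}Q_n(\zeta)
	= \frac{(n-k)!}{2\pi i}\int_\Gamma \frac{Q_n(z)}{(z-\zeta)^{k+1}}\,dz .
\end{equation*}
Taking $1/n$ times the logarithm of the modulus and letting $n\to\infty$ along $\Lambda$, the left-hand side tends to $-U^\nu(\zeta)$ (at points of continuity of $U^\nu$, and quasi-everywhere after the usual principle-of-descent / lower-envelope argument). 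On the right-hand side, $\tfrac1n\log|Q_n(z)| \to -U^{\mu_0}(z)$ uniformly on $\Gamma$, the factor $(z-\zeta)^{-(k+1)}$ contributes $-t\log|z-\zeta|$ in the limit, and Stirling applied to $(n-k)!/n!$ produces exactly the constant $-c_t$ with $c_t = t\log t + (1-t)\log(1-t)$ as in \eqref{defc}. Since $|\int_\Gamma (\cdots)\,dz| \le \mathrm{length}(\Gamma)\cdot \max_{z\in\Gamma}|\cdots|$, after taking $\tfrac1n\log$ the length factor disappears in the limit and one is left with $-U^\nu(\zeta) \le -c_t - \min_{z\in\Gamma}\big(U^{\mu_0}(z)+t\log|z-\zeta|\big)$, which is \eqref{eq:estimate}.

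First I would fix $\zeta\in\Delta$ and a contour $\Gamma$ enclosing $\Delta$ with $\zeta$ in its interior, write the Cauchy formula above, and bound the integral by its maximum modulus times the length of $\Gamma$. Second, I would take logarithms, divide by $n$, and pass to the limit termwise: the polynomial factor via $\tfrac1n\log|Q_n|\to -U^{\mu_0}$ (uniform on the compact set $\Gamma$, since the $\sigma_{n,0}$ converge weak-* and all zeros lie on $\Delta$, away from $\Gamma$), the singular factor via $\tfrac{k_n+1}{n}\log|z-\zeta|\to t\log|z-\zeta|$ (again uniform on $\Gamma$), and the combinatorial prefactor via Stirling, $\tfrac1n\log\frac{(n-k_n)!}{n!} = \tfrac1n\log\frac{(n-k_n)!}{n!} \to -c_t$. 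Third, I would handle the left-hand side: $\tfrac1n\log|Q_{n,k_n}(\zeta)| \to -U^\nu(\zeta)$ at points where $U^\nu$ is continuous; for the quasi-everywhere statement I would invoke the standard lower-envelope theorem for logarithmic potentials of weak-* convergent measures, together with the fact that $\liminf \tfrac1n\log|Q_{n,k_n}| \le -U^\nu$ quasi-everywhere while the reverse inequality here goes the convenient way. Finally, combining the three limits with the inequality $\max_{z\in\Gamma}$ of a product $\le$ product of maxima gives \eqref{eq:estimate} after replacing $\max(-f)$ by $-\min f$.

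The main obstacle is the rigorous justification of the limit of $\tfrac1n\log|Q_{n,k_n}(\zeta)|$ and of $\tfrac1n\log|Q_n(z)|$ uniformly/quasi-everywhere. For $z\in\Gamma$ this is clean because $\Gamma$ is a fixed compact set disjoint from $\Delta\supseteq\bigcup\{\text{zeros}\}$, so $\tfrac1n\log|Q_n(z)| = -U^{\sigma_{n,0}}(z)\to -U^{\mu_0}(z)$ uniformly by weak-* convergence and continuity of the kernel off the support. The delicate point is on $\Delta$ itself: $U^\nu$ need not be continuous at $\zeta$, and $\tfrac1n\log|Q_{n,k_n}(\zeta)|$ can dip to $-\infty$ at zeros of $Q_{n,k_n}$; the resolution is that \eqref{eq:estimate} is an inequality of the form "$U^\nu \ge (\text{something})$'', and the lower-envelope principle guarantees precisely that $U^\nu(\zeta) = \lim \big(-\tfrac1n\log|Q_{n,k_n}(\zeta)|\big)$ holds quasi-everywhere (more precisely, $U^\nu(\zeta)\ge \limsup(-\tfrac1n\log|Q_{n,k_n}(\zeta)|)$ fails only on a set of zero capacity), which is exactly the direction needed. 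I would also need to note that the bound is independent of the choice of $\Gamma$, so one may optimize over contours afterward; and that the Stirling estimate $\log\binom{n}{k_n}\sim -n\,c_{t}+o(n)$ (equivalently $\tfrac1n\log\frac{(n-k_n)!}{n!}\to -c_t$) holds uniformly once $k_n/n\to t\in[0,1)$.
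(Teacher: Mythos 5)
Your overall strategy is exactly the paper's: represent $Q_{n,k}$ by a Cauchy integral over $\Gamma$, bound the integral by $\mathrm{length}(\Gamma)$ times the maximum of the integrand, take $\frac1n\log$, and pass to the limit using weak-* convergence on the compact set $\Gamma$ (disjoint from $\Delta$) together with the Continuity and Lower Envelope Theorems to reach $\Delta$ itself. However, there is a concrete error in the combinatorial prefactor, and it is not cosmetic, because the exact constant $c_t$ is the whole point of the lemma. Cauchy's formula for the $k$-th derivative carries a factor $k!$, so with the normalization \eqref{def:QN} the correct representation is
\begin{equation*}
	Q_{n,k}(\zeta)=\frac{(n-k)!\,k!}{n!}\cdot\frac{1}{2\pi i}\oint_\Gamma\frac{Q_n(z)}{(z-\zeta)^{k+1}}\,dz=\binom{n}{k}^{-1}\frac{1}{2\pi i}\oint_\Gamma\frac{Q_n(z)}{(z-\zeta)^{k+1}}\,dz,
\end{equation*}
not $\frac{(n-k)!}{2\pi i}\oint_\Gamma(\cdots)$ as you wrote. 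Correspondingly, the Stirling limit you need is $\frac1n\log\binom{n}{k_n}\to -c_t$; the statement you actually invoke, $\frac1n\log\frac{(n-k_n)!}{n!}\to -c_t$, is false: that quantity equals $-\frac1n\log\binom{n}{k_n}-\frac1n\log (k_n!)$ and diverges to $-\infty$ like $-t\log n$ when $t>0$, so it is certainly not ``equivalent'' to the estimate on $\log\binom{n}{k_n}$ as you claim at the end.

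This propagates into your conclusion: your displayed inequality $-U^\nu(\zeta)\le -c_t-\min_{z\in\Gamma}\left(U^{\mu_0}(z)+t\log|z-\zeta|\right)$ rearranges to $U^\nu(\zeta)\ge c_t+\min_{z\in\Gamma}(\cdots)$, which, since $c_t\le 0$, is weaker than \eqref{eq:estimate} by $2|c_t|$ and would not reproduce the equilibrium constant $m_{1-t,\varphi}$ needed to identify $\nu$ in the sequel. With the corrected prefactor one has $U^{\sigma_{n,k}}(\zeta)=-\frac1n\log|Q_{n,k}(\zeta)|=\frac1n\log\binom{n}{k}+\frac1n\log(2\pi)-\frac1n\log\left|\oint_\Gamma\cdots\right|$, the first term tends to $-c_t$ with the right sign, and \eqref{eq:estimate} follows. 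The remainder of your argument (uniform convergence of $U^{\sigma_{n,0}}$ on $\Gamma$, the length factor disappearing, the extension to points of continuity of $U^\nu$ and then quasi-everywhere) matches the paper, which for the record first proves the bound for $\zeta$ strictly between $\Gamma$ and $\Delta$ and only then descends to $\Delta$; note also that the relevant half of the Lower Envelope Theorem concerns $\liminf_n U^{\sigma_{n,k_n}}$, not $\limsup$.
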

\begin{proof}
	Let first $\zeta $ be in the bounded complement of $\C \setminus (\Gamma \cup \Delta)$. We have the integral representation
	$$
	Q_{n,k}(\zeta)=\binom{n}{k}^{-1} \frac{1}{2\pi i} \oint_\Gamma \frac{Q_n(z)}{(z-\zeta)^{k+1}}dz,
	$$
	from which we get that
	$$
	U^{\sigma_{n,k}}(\zeta)= -\frac{1}{n} \log \left| Q_{n,k}(\zeta)\right| = \frac{1}{n} \log  \binom{n}{k}  +\frac{1}{n}\log(2\pi) - \frac{1}{n} \log \left|  \oint_\Gamma \frac{Q_n(z)}{(z-\zeta)^{k+1}}dz \right|.
	$$
	By Stirling's formula, 
	$$
	\lim_n \frac{1}{n} \log  \binom{n}{k_n} = -c_t.
	$$
	On the other hand,
	\begin{align*}
		\frac{1}{n} \log \left|  \oint_\Gamma \frac{Q_n(z)}{(z-\zeta)^{k+1}}dz \right| & \le \frac{1}{n} \log \left(  \max_{z\in \Gamma} \left| \frac{Q_n(z)}{(z-\zeta)^{k+1}}\right| \cdot \text{length}(\Gamma) \right)\\
		&=\max_{z\in \Gamma} \left(-U^{\sigma_{n,0}}(z)-\frac{k+1}{n} \log|z-\zeta| \right)+\frac{1}{n}   \log\left(  \text{length}(\Gamma)\right)\\
		&=-\min_{z\in \Gamma} \left(U^{\sigma_{n,0}}(z)+\frac{k+1}{n} \log|z-\zeta| \right)+\frac{1}{n}   \log\left(  \text{length}(\Gamma)\right).
	\end{align*}
	Thus, we conclude that
	\begin{equation} \label{lowerbd2}
		U^{\sigma_{n,k_n}}(\zeta)\ge -c_t + \min_{z\in \Gamma} \left(U^{\sigma_{n,0}}(z)+\frac{k_n+1}{n} \log|z-\zeta| \right)+o(1), \quad n\to \infty.
	\end{equation}
	Taking limits in both sides and using assumptions \eqref{assumption1} and \eqref{assumptNu} we conclude that \eqref{eq:estimate} is valid for $\zeta $   in the bounded complement of $\C \setminus (\Gamma \cup \Delta)$. 
	
	By the the Continuity Theorem \cite[Theorem A.IV.4.6]{Saff:97}, we can extend this inequality to any $\zeta \in\Delta$ where the potential $U^\nu$ is continuous. Moreover, by the Lower Envelope Theorem  \cite[Theorem A.IV.4.8]{Saff:97}, it holds quasi-everywhere on $\Delta$.
\end{proof}

We need to estimate the right-hand side of \eqref{eq:estimate} in the case when $\mu_0$ is the arcsine distribution given in \eqref{arcsined}. In this case,
\begin{equation} \label{equilPot}
	U^{\mu_0}(z) = \log(2) - \log \left| \Phi(z) \right|,
\end{equation}
with $\Phi$ given in \eqref{inverseZhuk}. 

Also, following \cite{Saff:97}, we use the notation
$$
\text{``}\inf_{z \in \Delta}\text{''} h(z)
$$
for essential infimum, that is, the largest number $L$ such that on $\Delta$ the real function $h$ takes values smaller than $L$ only on a set of zero capacity.
\begin{lemma}\label{lem_lowerestimateArcsin}
	With the assumptions of Lemma~\ref{lem_lowerestimate},
	\begin{equation} \label{eq:newbound}
		\text{``}\inf_{x\in [-1,1]}\text{''}	\left( U^{\nu}+\varphi\right) (x)  \ge   	m_{1-t ,\varphi} ,
	\end{equation}
	where $m_{1-t ,\varphi}$ is given in \eqref{valueM}. 
\end{lemma}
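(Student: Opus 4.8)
The plan is to apply Lemma~\ref{lem_lowerestimate} with a \emph{single} Jordan contour $\Gamma$, namely a confocal ellipse $\Gamma_\rho\isdef\{z:|\Phi(z)|=\rho\}$ with $\rho>1$, along which $U^{\mu_0}$ is constant by \eqref{equilPot}. Assume first $t\in(0,1)$ (for $t=0$, letting $\rho\to1^+$ in \eqref{eq:estimate} already gives $U^\nu\ge\log 2=m_{1,0}$ q.e.\ on $\Delta$, and $\varphi_0\equiv0$, so nothing more is needed). Since $|\Phi|\equiv\rho$ on $\Gamma_\rho$ and $t>0$, Lemma~\ref{lem_lowerestimate} with $\Gamma=\Gamma_\rho$ together with \eqref{equilPot} give, for quasi-every $\zeta\in\Delta$,
\[
U^\nu(\zeta)\ \ge\ -c_t+\log 2-\log\rho+t\log\operatorname{dist}(\zeta,\Gamma_\rho),
\]
so it remains only to compute $\operatorname{dist}(\zeta,\Gamma_\rho)$. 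Parametrizing $\Gamma_\rho$ as $z=a\cos\theta+ib\sin\theta$ with $a=\tfrac12(\rho+\rho^{-1})$, $b=\tfrac12(\rho-\rho^{-1})$ (so $a^2-b^2=1$) and minimizing $|z-\zeta|^2=\cos^2\theta-2a\zeta\cos\theta+\zeta^2+b^2$ over $\cos\theta\in[-1,1]$, one finds $\operatorname{dist}(\zeta,\Gamma_\rho)=b\sqrt{1-\zeta^2}$ for $|\zeta|\le 1/a$ and $\operatorname{dist}(\zeta,\Gamma_\rho)=a-|\zeta|$ for $1/a<|\zeta|<1$.

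Adding $\varphi_t(\zeta)=-\tfrac t2\log(1-\zeta^2)$ (valid on $(-1,1)$), one observes that the resulting lower bound for $U^\nu(\zeta)+\varphi_t(\zeta)$ is, for fixed $\rho$, \emph{constant} on $\{|\zeta|\le 1/a\}$, equal to $-c_t+\log 2-\log\rho+t\log b$; maximizing this value over $\rho>1$ leads to $\rho=\rho^\ast\isdef\sqrt{(1+t)/(1-t)}$, for which $a=1/s$, $b=t/s$, and $1/a=s$, where $s\isdef\sqrt{1-t^2}$, so that $\Gamma^\ast\isdef\Gamma_{\rho^\ast}$ is the ellipse confocal with $\Delta$ of semi-axes $1/s$ and $t/s$ (and hence encloses $\Delta$). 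I then expect that, using $s^2=1-t^2$ and the explicit value \eqref{valueM}, substitution of $\Gamma^\ast$ gives, for $|\zeta|\le s$, exact equality $U^\nu(\zeta)+\varphi_t(\zeta)=m_{1-t,\varphi}$ once all logarithmic terms cancel — a consistency check, since $\supp(\mu_t)=[-s,s]$ — while for $s<|\zeta|<1$ the right-hand side equals $m_{1-t,\varphi}+t\log\frac{1-s|\zeta|}{t\sqrt{1-\zeta^2}}$, whose extra term is $\ge0$ because $1-sx\ge t\sqrt{1-x^2}$ for $x\in[s,1)$: squaring and using $t^2=1-s^2$, this is precisely $(x-s)^2\ge0$.

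Thus $(U^\nu+\varphi_t)(\zeta)\ge m_{1-t,\varphi}$ at every $\zeta\in(-1,1)$ off a set of zero capacity (and trivially at $\zeta=\pm1$, where the left-hand side is $+\infty$), hence quasi-everywhere on $\Delta$; this is precisely the asserted bound \eqref{eq:newbound} on the essential infimum. The only genuinely non-routine ingredient is the identification of the correct contour $\Gamma^\ast$ — equivalently of the number $\rho^\ast$; once it is in hand, the remaining work is the elementary piecewise distance computation together with the bookkeeping of logarithms, which is in effect forced by formula \eqref{valueM}. The argument uses no property of $\nu$ beyond the conclusion of Lemma~\ref{lem_lowerestimate}, and the arcsine hypothesis on $\mu_0$ only through the explicit potential \eqref{equilPot}.
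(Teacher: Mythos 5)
Your proof is correct and follows essentially the same route as the paper: apply Lemma~\ref{lem_lowerestimate} on the confocal ellipse $|\Phi(z)|=\rho$, compute the distance from $\zeta$ to the ellipse, add $\varphi_t$, and optimize over $\rho$, arriving at the same optimal ellipse $A=1/s$ and the value \eqref{valueM}. The only (cosmetic) difference is that the paper bypasses the case distinction $|\zeta|\lessgtr 1/a$ by using the uniform bound $(A-\zeta)^2\ge (A^2-1)(1-\zeta^2)$, whereas you keep the exact piecewise distance and verify the inequality $1-sx\ge t\sqrt{1-x^2}$ on the outer region; both are valid.
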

\begin{proof}
	Fix $\zeta\in (-1,1)$ where the potential $U^\nu$ is continuous; as a contour $\Gamma$ in \eqref{eq:estimate} we take the level curve of $\Phi$, namely
	$$
	\Gamma = \Gamma_A \isdef  \left\{  z\in \C\setminus \Delta:\, |\Phi(z)|=\Phi(A) \right\}, \quad A>1. 
	$$
	It is easy to see that $\Gamma_A$ is an ellipse whose equation in Cartesian coordinates on the $(x,y)$ plane is
	$$
	\frac{x^2}{A^2} + \frac{y^2}{(A^2-1)}=1.
	$$
By \eqref{equilPot}, it is also a level curve of $U^{\mu_0}$.  Thus, the problem of minimizing $\log |z-\zeta|$ with $z\in \Gamma_A$ is equivalent to the real-valued constrained optimization problem
	\begin{align*}
		&\text{minimize} \quad  \left(x-\zeta\right)^2 + y^2 \\
		& \text{subject to} \quad \frac{x^2}{A^2} + \frac{y^2}{A^2-1}=1.
	\end{align*}
	Assume w.o.l.g.~that $0\le \zeta \le 1$. Standard methods show that the only possible points of minimum are $(x,y) =(A, 0)$ and, if $\zeta A^2\leq 1$, 
	$$
	(x,y)= \left( \zeta A^2, \pm \sqrt{\left(A^2-1 \right) \left( 1-\zeta \right)}\right).
	$$ 
	Hence, the global minimum is
	$$
	\begin{cases}
		(A-\zeta)^2, & \text{if } \zeta A^2\ge 1, \\
		\min \left\{ (A-\zeta)^2, (A^2-1) (1-\zeta^2 ) \right\} , & \text{if } \zeta A^2\le 1.
	\end{cases}
	$$
	Since
	$$
	(A-\zeta)^2 = (A^2-1) (1-\zeta^2 ) + (\zeta A-1)^2\ge (A^2-1) (1-\zeta^2 ),
	$$
	for $-1\le \zeta \le 1$ and $ z\in \Gamma_A$,
	$$
	\log|z-\zeta|\ge \frac{1}{2}\log (A^2-1) + \frac{1}{2}\log (1-\zeta^2), 
	$$
	so that, combining it with \eqref{equilPot},
	$$
	U^{\mu_0}(z)+t \log|z-\zeta| \ge  \log(2) - \log \left| \Phi(A) \right| + \frac{t}{2}\log (A^2-1) + \frac{t}{2}\log (1-\zeta^2).
	$$
	Using it in \eqref{eq:estimate}, we see that for $-1\le \zeta \le 1$,
	\begin{equation} \label{eq:lowerbound}
		U^{\mu_t}(\zeta) \ge -c_t +  \log(2) - \log \left| \Phi(A) \right| + \frac{t}{2}\log (A^2-1) + \frac{t}{2}\log (1-\zeta^2).
	\end{equation}
	With $\varphi$ as in \eqref{phi} we can rewrite it as
	$$
	\left( U^{\mu_t}+\varphi\right) (\zeta) \ge -c_t +  \log(2) - \log \left| \Phi(A) \right| + \frac{t}{2}\log (A^2-1) ,
	$$
	and we conclude that
	\begin{equation} \label{eq:lowerbound2}
		\min_{x\in [-1,1]}	\left( U^{\mu_0}+\varphi\right) (x) \ge -c_t +  \log(2) - \log \left| \Phi(A) \right| + \frac{t}{2}\log (A^2-1) .
	\end{equation}
	The global maximum of the right-hand side over $A>1$ is attained at
	$$
	A =\frac{1}{\sqrt{1-t^2}}=\frac{1}{s}
	$$
	and is equal to 
	$$
	-c_t +  \log(2) - \log \left| \Phi(1/s) \right| + \frac{t}{2}\log (s^{-2}-1) = -c_t +  \log(2) - \log \left| \Phi(1/s) \right| +t \log (t/s).
	$$
	Simplifying, we obtain that 
	\begin{align*}
		\min_{x\in [-1,1]}	\left( U^{\mu_t}+\varphi\right) (x) & \ge -c_t +  \log(2) + (1-t) \log
		(1-t)+\left(\frac{t}{2}-1\right)
		\log (2-t)+\frac{t}{2}  \log (t) 
	\end{align*}
	and \eqref{eq:newbound} follows.
\end{proof}
The assertion of Theorem~\ref{mainthm0} follows from Lemma~\ref{lem_lowerestimateArcsin} and 
the characterization~\eqref{charact1a}.

\section*{Acknowledgments}

The first author was partially supported by Simons Foundation Collaboration Grants for Mathematicians (grant 710499).
He also acknowledges the support of the project PID2021-124472NB-I00, funded by MCIN/AEI/10.13039/501100011033 and by ``ERDF A way of making Europe'', as well as the support of Junta de Andaluc\'{\i}a (research group FQM-229 and Instituto Interuniversitario Carlos I de F\'{\i}sica Te\'orica y Computacional).


\def\cprime{$'$}

\end{document}